\documentclass{article}
\usepackage{graphics}
\usepackage{graphicx}
\usepackage{latexsym}
\usepackage{amssymb}
\usepackage{amsmath}

\setlength{\textheight}{19.5 cm}
\setlength{\textwidth}{12.5 cm}

\newtheorem{theorem}{Theorem}[section]

\newtheorem{proposition}[theorem]{Proposition}
\newtheorem{corollary}[theorem]{Corollary}

\newtheorem{preexample}{Example}[section]

\newtheorem{preremark}{Remark}
\newenvironment{remark}{\begin{preremark}\rm}{\end{preremark}}



\newcommand{\qed }{ \hfill $\Box$ }

\begin{document}

\begin{center}
  {\Large Transport Theorem and Continuity Equations: a Stochastic Approach\\}
\end{center}

\vspace{0.3cm}

\begin{center}

{\large Pedro Catuogno \footnote{\textit{Departamento de Matem\'{a}tica, Universidade Estadual de Campinas,\\ 13.081-970 -
 Campinas - SP, Brazil. e-mail: pedrojc@ime.unicamp.br}} and Sim\~ao N. Stelmastchuk \footnote{
\textit{Departamento de  Matem\'atica, Universidade Estadual do Paran\'a,\\ 84600-000 -  Uni\~ao da Vit\' oria - PR,
Brazil. e-mail: simnaos@gmail.com}}}
\end{center}

\vspace{0.3cm}

\begin{abstract}
  We give a stochastic generalization of transport theorem on smooth manifold. Furthermore, we deduce a system of continuity equation and present  some application on torus.
\end{abstract}

\noindent {\bf Key words:}  stochastic flows, transport theorem, continuity equation.

\vspace{0.3cm} \noindent {\bf MSC2010 subject classification:} 76M35, 60K40

\section{Introduction}

The integration on smooth manifold is essential on the development of Mathema-tics and Science. One can integrate a form on a smooth manifold over a smooth flow. This integration yields the theory of fluid dynamics on smooth manifolds. For example, equations of motion, Euler's equation and Navier-Stokes's equation are deduced from this kind of integration( see for instance \cite{chorin} and \cite{marsden}). Moreover, in Mechanics there are many results yielded from integration on smooth manifolds(see for example \cite{arnold}).

It is possible to make an integration over a stochastic flow on smooth manifolds, see Bismut \cite{bismut} and Kunita \cite{kunita}. Recently, L\'azaro-Cam\'i and Ortega used this kind of integral in order to study the mechanic flows, see \cite{lazaro1} and \cite{lazaro2}.

The subject of this work is to give a stochastic transport theorem on smooth manifolds. In fact, we consider $X^{0}, X^{1}, \ldots, X^{m}$  time-dependent smooth vector fields, $\theta$ a time-dependent p-form  with compact support, $\sigma_p$ a $p$-simplex and $(B^1_t,\ldots,B^m_t)$ a Brownian motion in $\mathbb{R}^{m}$. Let $\phi_t$ be the flow generated by the Stratonovich stochastic differential equation,
\begin{eqnarray*}
  dx & = & X^0(t,x) dt + X^i(t,x)\circ\! dB^{i}_t \\
  x(0) & = & x.
\end{eqnarray*}
Then we have the It\^o's formula (see Theorem 4.2 \cite{kunita} or Theorem 3.7 in \cite[ch.IV]{bismut}),
\[
  \int_{\phi_t(\sigma_p)} \theta = \int_{\sigma_p} \theta + \int_{0}^{t}\left(\int_{\phi_{s}(\sigma_p)}\frac{\partial \theta}{\partial t}\right)ds + \sum_{k=0}^{m} \int_{0}^{t} \left(\int_{\phi_{s}(\sigma_p)} \!\!\!\! L_{X^k_s}\theta \right) \circ \! dB^k_s.
\]
If the smooth manifold has a volume form $\mu$, we prove a stochastic generalization of transport theorem
\[
  d\int_{\phi_{t}(\sigma_n)} f_t\mu = \int_{\phi_{t}(\sigma_n)}\left(\frac{\partial f_t}{\partial t} + \sum_{k=0}^{m} div_{\mu}(f_t X^k_t) \circ \! B^k_t \right) \mu,
\]
where $f$ is a smooth function.

In the case of a compact oriented Riemannian manifold we obtain a system of continuity equations for the mass density $\rho_{t}$,
\begin{eqnarray*}
  \frac{\partial \rho_t}{\partial t} + div_{\mu}(\rho_t X_0(t)) & = & 0 \\
  div_{\mu}(\rho_t X_k(t))& = & 0.
\end{eqnarray*}

About the structure of this paper, in section 2 we proceed with study of the stochastic flows acting on integral, more specifically, we develop the  necessary tools to show the It\^o's formula above. Then in section 4 we prove our main result, a stochastic generalization of transport theorem and  we give some applications.

\section{Stochastic flows acting on Integral}

In this section, our mean is to construct the integral over forms upon a stochastic flow. We begin by introducing some notations. Let $(\Omega, \mathcal{F},(\mathcal{F}_{t})_{t\geq0}, \mathbb{P})$ be a \linebreak probability space which satisfies the usual hypotheses(see for instance \cite[ch.I]{kunita}).

Let $M$ be a n-dimensional smooth manifold, $X^{0}, X^{1}, \ldots, X^{m}$ time-dependent smooth vector fields on $M$ and $B_t=(B^1_t, \ldots,B^m_t)$ a m-dimensional Brownian motion in $\mathbb{R}^{m}$. We consider the Stratonovich stochastic differential equation on $M$ given by
\begin{eqnarray} \label{sde}
  dx & = & X^0(t,x) dt + X^k(t,x)\circ\! dB^{k}_t \\
  x(0) & = & x, \, x \in M. \nonumber
\end{eqnarray}
It is well known that there exists an unique solution of this equation with a maximal time $T(x)$. A complete study about SDE (\ref{sde}) is founded in \cite{kunita}. We denote the solution of SDE (\ref{sde}) by $\phi_{t}(\omega,x)$ or, simply, $\phi_t$.

Let $\theta$ be a time-dependent p-form on $M$ with compact support and $\sigma_p$ a $p$-simplex in $M$, we denote by $\int_{\phi_t(\sigma_p)} \theta$ the real semimartingale $ \int_{\sigma_p} \phi_{t}^{*}\theta$.


We recall that the push-forward for a smooth vector field $X$ on $M$  by a diffeomorphism $\phi$ is given by
\[
  (\phi_* X)_y = \phi_{\phi^{-1}(y)*}X(\phi^{-1}(y)).
\]


We can now rephrase Theorem 4.3 in \cite[ch.III]{kunita} on time-dependent forms as follows.

\begin{theorem}\label{itoformulaforforms}
  Let $M$ be a n-dimensional smooth manifold and $\phi_{t}$ the flow in $M$ given by SDE (\ref{sde}). Then for a time-dependent p-form $\theta$ with compact support we have
  \begin{eqnarray*}
    \phi_{t}^*\theta - \theta
    & = & \int_{0}^{t}\phi_{s}^*\frac{\partial \theta}{\partial t}ds + \sum_{k=0}^{m}\int_{0}^{t}\left(\phi_{s}^{*}L_{X^k_s}\theta\right) \circ\! d B^{k}_{s}.\\
  \end{eqnarray*}
\end{theorem}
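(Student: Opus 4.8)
The plan is to establish the identity by reducing the general time-dependent case to the classical It\^o formula for stochastic flows on forms, namely Theorem 4.3 in \cite[ch.III]{kunita}, which the statement explicitly says we may rephrase. The subtlety is that the cited result is presumably stated for a \emph{fixed} form $\theta$, whereas here $\theta=\theta_t$ carries its own time dependence, so the job is to correctly account for the extra $\partial\theta/\partial t$ term that this introduces.

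First I would fix notation and recall the geometric meaning of the pullback semimartingale $t\mapsto \phi_t^*\theta_t$, which is a process taking values in the (finite-dimensional, at each point) space of $p$-forms. The natural strategy is to compute its Stratonovich differential by treating the two sources of time dependence separately and then combining them. Concretely, I would first apply the Kunita It\^o formula to the pullback of a \emph{frozen} form: for a fixed $s$, the process $t\mapsto \phi_t^*\theta_s$ satisfies
\[
  \phi_t^*\theta_s-\theta_s=\sum_{k=0}^{m}\int_0^t\bigl(\phi_u^*L_{X^k_u}\theta_s\bigr)\circ dB^k_u,
\]
where I write $dt=\circ\,dB^0_t$ so that the drift vector field $X^0$ is absorbed into the sum with $B^0_t:=t$. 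This is exactly the content of the cited theorem, and it is the one place where the substantive stochastic analysis (the Stratonovich chain rule for the flow generated by SDE (\ref{sde})) is invoked.

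Next I would incorporate the explicit time dependence. The clean way is to consider the two-parameter map $(t,s)\mapsto \phi_t^*\theta_s$ and differentiate along the diagonal $s=t$. Since Stratonovich calculus obeys the ordinary Leibniz and chain rules, the differential along the diagonal splits as the sum of the partial differential in $t$ (the flow variable, given by the displayed frozen-form formula) and the partial differential in $s$ (the explicit variable, which is the deterministic bounded-variation term $\phi_t^*\frac{\partial\theta}{\partial t}\,dt$, obtained by commuting the pullback past the $t$-derivative of $\theta$). Adding these and integrating from $0$ to $t$ gives
\[
  \phi_t^*\theta-\theta=\int_0^t\phi_s^*\frac{\partial\theta}{\partial t}\,ds+\sum_{k=0}^{m}\int_0^t\bigl(\phi_s^*L_{X^k_s}\theta\bigr)\circ dB^k_s,
\]
which is the asserted formula.

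The main obstacle I anticipate is making the diagonal-differentiation argument rigorous rather than merely formal: one must justify that the Stratonovich Leibniz rule applies to the joint process $\phi_t^*\theta_t$ and that the explicit-time contribution really enters as a finite-variation term with no extra It\^o correction. The compact-support hypothesis on $\theta$ is what guarantees the integrals are well defined and that all the manipulations (interchanging $\partial_t$ with $\phi_s^*$, controlling the pullbacks uniformly) are legitimate; I would lean on the smoothness of the flow $\phi_t$ in the space variable, established in \cite{kunita}, to carry this through. A safe alternative, if the diagonal argument feels delicate, is to verify the formula first for a product form $\theta_t=g(t)\,\eta$ with $g$ a scalar time function and $\eta$ fixed, where the Stratonovich product rule gives the two terms transparently, and then extend to general $\theta$ by the usual density and linearity arguments.
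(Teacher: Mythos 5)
The paper gives no proof of this theorem at all: it is stated as a ``rephrasing'' of Theorem 4.3 in Kunita's lecture notes, with the entire burden of the time-dependent extension left implicit. Your proposal therefore does not parallel an argument in the paper so much as supply the one the authors omitted, and the argument you give is the standard and correct one. Freezing the form, invoking Kunita's formula for $t\mapsto\phi_t^*\theta_s$ with $s$ fixed (with the convention $B^0_t=t$ absorbing the drift, which matches the paper's summation from $k=0$), and then adding the explicit-time contribution along the diagonal is exactly how one upgrades the fixed-form It\^o formula to a time-dependent one. The point you flag as delicate is indeed the only one that needs care: the splitting of $d\bigl(\phi_t^*\theta_t\bigr)$ into the flow differential plus $\phi_t^*\frac{\partial\theta}{\partial t}\,dt$ is legitimate because the explicit $t$-dependence of $\theta$ is of bounded variation, so its cross-variation with the martingale part of the flow vanishes and no It\^o correction appears; writing $\theta_t=\theta_0+\int_0^t\frac{\partial\theta}{\partial s}\,ds$, applying Kunita's formula to $\theta_0$ and to each $\frac{\partial\theta}{\partial s}$, and using a stochastic Fubini argument makes this fully rigorous, and your fallback via product forms $g(t)\,\eta$ plus linearity accomplishes the same thing. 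In short, your proposal is sound and is more informative than the paper, which simply cites the reference; the only caveat is that you should check whether the cited Theorem 4.3 is stated for forms or for general tensor fields, and adjust the specialization accordingly, since that is the sole content the paper actually imports.
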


In order to write It\^o's formulas to real semimartingales $\int_{\phi_t(\sigma_p)} \theta$, our next step is to show a Fubini's Theorem.

\begin{proposition}\label{fubinimanifold}
  Let $M$ be n-dimensional smooth manifold, $\theta$ a time-dependent p-form on $M$ with compact support and $B_t$ a real Brownian motion. Then
  \[
    \int_{\sigma_p} \left(\int_{0}^{t} \phi_{s}^* \theta \circ\! d B_s \right)  = \int_{0}^{t} \left(\int_{\sigma_p}  \phi_{s}^* \theta\right) \circ\! d B_s.
  \]
\end{proposition}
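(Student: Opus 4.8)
The plan is to reduce the stated identity to a scalar stochastic Fubini theorem and then to prove the latter by approximation with Riemann sums. First I would pull everything back to the standard simplex. Writing $\sigma_p \colon \Delta^p \to M$ for the parametrization, the defining relation $\int_{\phi_s(\sigma_p)}\theta = \int_{\sigma_p}\phi_s^*\theta = \int_{\Delta^p}(\phi_s\circ\sigma_p)^*\theta$ shows that $(\phi_s\circ\sigma_p)^*\theta$ is a $p$-form on the $p$-dimensional domain $\Delta^p$, hence of top degree, so it equals $H(s,u)\,du^1\wedge\cdots\wedge du^p$ for a single coefficient $H(s,u)$. For each fixed $u\in\Delta^p$ the process $s\mapsto H(s,u)$ is a real semimartingale, while for each fixed $s$ it is a smooth function of $u$. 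In these terms the claim is exactly the exchange
\[
  \int_{\Delta^p}\left(\int_0^t H(s,u)\circ\! dB_s\right)du = \int_0^t\left(\int_{\Delta^p}H(s,u)\,du\right)\circ\! dB_s ,
\]
i.e. a Fubini theorem between the deterministic Lebesgue integral $\int_{\Delta^p}du$ and the Stratonovich integral $\int_0^t\circ\,dB_s$.

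Next I would exploit linearity at the level of Riemann sums. For a partition $0=s_0<\cdots<s_N=t$ the Stratonovich integral $\int_0^t H(s,u)\circ dB_s$ is the limit of the symmetric sums $S_N(u)=\sum_i \tfrac12\big(H(s_i,u)+H(s_{i+1},u)\big)(B_{s_{i+1}}-B_{s_i})$. Because $S_N(u)$ is a finite linear combination of the coefficients $H(s_i,u)$, the operation $\int_{\Delta^p}(\cdot)\,du$ commutes with it termwise, and $\int_{\Delta^p}S_N(u)\,du$ is precisely the corresponding symmetric Riemann sum for the right-hand side $\int_0^t\big(\int_{\Delta^p}H(s,u)\,du\big)\circ dB_s$. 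Thus the identity will follow once I can pass to the limit under the spatial integral, that is, once I justify
\[
  \int_{\Delta^p}\Big(\lim_N S_N(u)\Big)\,du = \lim_N \int_{\Delta^p}S_N(u)\,du .
\]

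The interchange of $\lim_N$ and $\int_{\Delta^p}$ is where the real work lies, and it is the step I expect to be the main obstacle. I would control it by a dominated-convergence argument in $L^2(\Omega)$ that is uniform in $u$. Using Theorem \ref{itoformulaforforms} I have the explicit semimartingale decomposition of $H(\cdot,u)$; its finite-variation and martingale parts are built from the coefficients of $\phi_s^*\frac{\partial\theta}{\partial t}$ and $\phi_s^* L_{X^k_s}\theta$. Since $\theta$ has compact support and the flow and vector fields are smooth, these coefficients, together with their $u$-derivatives, are bounded on $[0,t]\times\Delta^p$ (after localizing by a stopping time so that the flow stays before explosion and the bounds become deterministic). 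The stochastic-integral isometry then yields an estimate $\sup_{u}\mathbb{E}\,|S_N(u)-\int_0^t H(s,u)\circ dB_s|^2\to 0$, and the compactness of $\Delta^p$ upgrades this to convergence of $\int_{\Delta^p}S_N(u)\,du$ in $L^2(\Omega)$ to the expected limit; removing the localization by letting the stopping time increase to the explosion time finishes the argument.

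An alternative I would keep in reserve is to convert the Stratonovich integral into its It\^o form, $\int_0^t H\circ dB_s=\int_0^t H\,dB_s+\tfrac12\langle H,B\rangle_t$, apply the classical stochastic Fubini theorem (as in Protter or Revuz--Yor) to the It\^o part and the ordinary Fubini theorem to the bracket correction $\langle H,B\rangle_t$, and then reassemble. The hypotheses of the classical theorem --- joint measurability in $(s,u,\omega)$ and square-integrability of the integrand --- are again supplied by the compact support of $\theta$ and the smoothness of the flow, so this route leads to the same identity.
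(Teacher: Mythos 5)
Your proposal follows essentially the same route as the paper: approximate the Stratonovich integral by symmetric Riemann sums, commute the finite sums with the integral over the simplex by linearity, and pass to the limit in $L^2$. In fact your treatment is more careful than the paper's, which disposes of the limit interchange with a one-line appeal to ``uniqueness of limit,'' whereas you correctly identify that interchange as the substantive step and indicate how to control it via uniform-in-$u$ estimates coming from the compact support of $\theta$ and localization of the flow.
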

\begin{proof}
  Let $\Delta=\{0=t_{0}< \ldots< t_{n}=T\}$ be a partition of the interval $[0,T]$, $t^*= \frac{t_{j+1}+t_{j}}{2}$ and $\Delta B(\omega) = (B_{t_{j+1}}(\omega) - B_{t_{j}}(\omega))$. It is well known that, in $L^2([0,T]\times \Omega)$,
  \[
    \lim_{j \to \infty} \mathbb{E}\left(\left|\int_{0}^{t} \left(\int_{\sigma_p}  \phi_{s}^* \theta\right) \circ\! d B_{s} - \sum_{j}\left(\int_{\sigma_p}  \phi_{t*}^* \theta\right)\Delta B(\omega)\right|^2\right)= 0.
  \]
  From uniqueness of limit we get
  \[
    \int_{\sigma_p} \left(\int_{0}^{t} \phi_{s}^* \theta \circ\! d B_s \right) = \int_{0}^{t} \left(\int_{\sigma_p}  \phi_{s}^* \theta\right) \circ\! d B_{s}.
  \]
  \qed
\end{proof}

\begin{corollary}\label{theoremofintegralequation}
  Let $M$ be a n-dimensional smooth manifold and $\phi_{t}$ the flow in $M$ given by SDE (\ref{sde}). Then for a time-dependent p-form $\theta$ with compact support we have
  \begin{enumerate}
    \item
      \begin{equation}\label{theitointegraleq2}
        \int_{\phi_t(\sigma_p)} \theta = \int_{\sigma_p} \theta + \int_{0}^{t}\left(\int_{\phi_{s}(\sigma_p)}\frac{\partial \theta}{\partial t}\right)ds + \sum_{k=0}^{m} \int_{0}^{t} \left(\int_{\phi_{s}(\sigma_p)} \!\!\!\! L_{X^k_s}\theta \right) \circ\! d B^k_s.
      \end{equation}
    \item
      \begin{eqnarray}\label{theitointegraleq1}
        \int_{\phi_t(\sigma_p)} \theta
        & = & \int_{\sigma_p} \theta +  \int_{0}^{t} \left(\int_{\phi_{s}(\sigma_p)}(\frac{\partial }{\partial t} + \dfrac{1}{2} \sum_{k=1}^{m}L^2_{X^k_s}+ L_{X^0_s})\theta \right) ds \nonumber \\
        & + & \sum_{k=1}^{m}\int_{0}^{t} \left(\int_{\phi_{s}(\sigma_p)} \!\!\!\! L_{X^k_s}\theta \right) dB^k_s.
      \end{eqnarray}
  \end{enumerate}
\end{corollary}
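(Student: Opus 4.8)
The plan is to derive both identities from Theorem \ref{itoformulaforforms} together with the Fubini interchange of Proposition \ref{fubinimanifold}, using the convention $B^0_t=t$ so that the $k=0$ summand carries the drift. For the first identity I would start from the definition $\int_{\phi_t(\sigma_p)}\theta=\int_{\sigma_p}\phi_t^*\theta$ and substitute the pullback expansion of Theorem \ref{itoformulaforforms},
\[
\phi_t^*\theta=\theta+\int_0^t\phi_s^*\frac{\partial\theta}{\partial t}\,ds+\sum_{k=0}^m\int_0^t\left(\phi_s^*L_{X^k_s}\theta\right)\circ dB^k_s.
\]
Integrating this identity of $p$-forms over the fixed simplex $\sigma_p$ and using linearity of $\int_{\sigma_p}$, the task reduces to interchanging $\int_{\sigma_p}$ with the time integrals. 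For the $ds$-term this is the classical Fubini theorem for the deterministic, parameter-dependent integral; for each Stratonovich term with $k\geq 1$ it is exactly Proposition \ref{fubinimanifold} applied to the form $L_{X^k_s}\theta$. Recognizing $\int_{\sigma_p}\phi_s^*\eta=\int_{\phi_s(\sigma_p)}\eta$ in every term then yields equation (\ref{theitointegraleq2}), the $k=0$ summand being absorbed into the drift once it is read with $dB^0_s=ds$.

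For the second identity I would convert the Stratonovich integrals in (\ref{theitointegraleq2}) into It\^o integrals. Writing $Y^k_s:=\int_{\phi_s(\sigma_p)}L_{X^k_s}\theta$, the conversion reads $\int_0^t Y^k_s\circ dB^k_s=\int_0^t Y^k_s\,dB^k_s+\tfrac12[Y^k,B^k]_t$. The $k=0$ term has $B^0_s=s$ and hence no martingale part, so it contributes only $\int_0^t\big(\int_{\phi_s(\sigma_p)}L_{X^0_s}\theta\big)\,ds$ to the drift. The heart of the computation is the bracket $[Y^k,B^k]_t$ for $k\geq 1$.

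To evaluate it I would apply the first identity once more, now to the form $L_{X^k_s}\theta$ in place of $\theta$, so as to read off the Brownian part of $Y^k$: its $B^j$-component is $\int_0^{\,\cdot}\big(\int_{\phi_s(\sigma_p)}L_{X^j_s}L_{X^k_s}\theta\big)\,dB^j_s$. Since $B^j$ and $B^k$ are independent for $j\neq k$ while $[B^k,B^k]_s=s$, only the $j=k$ contribution survives and
\[
\tfrac12[Y^k,B^k]_t=\tfrac12\int_0^t\left(\int_{\phi_s(\sigma_p)}L^2_{X^k_s}\theta\right)ds.
\]
Summing over $k=1,\ldots,m$ and collecting these corrections together with the drift and the $\frac{\partial\theta}{\partial t}$ contribution inside a single $ds$-integral produces equation (\ref{theitointegraleq1}).

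The main obstacle is this covariation step: it requires iterating the construction one level (applying the form identity to $L_{X^k_s}\theta$) and justifying the extraction of the Brownian part of the \emph{time-dependent} integrand $Y^k$ — in particular, confirming that the explicit time-dependence of $X^k_s$ and of $\theta$ enters only the bounded-variation part and therefore leaves $[Y^k,B^k]$ unaffected. Everything else is linearity of $\int_{\sigma_p}$ and the two Fubini interchanges already in hand.
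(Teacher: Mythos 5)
Your proposal is correct and follows essentially the same route as the paper: part (1) by combining Theorem \ref{itoformulaforforms} with the Fubini interchange of Proposition \ref{fubinimanifold}, and part (2) by the Stratonovich--It\^o conversion, evaluating the bracket $[Y^k,B^k]$ through a second application of identity (\ref{theitointegraleq2}) to the form $L_{X^k_s}\theta$ so that only the $l=k$ term survives. Your explicit remark that the time-dependence of $X^k_s$ and $\theta$ affects only the bounded-variation part (hence not the covariation) is a point the paper leaves implicit, but the argument is the same.
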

\begin{proof}
  {\bf 1.}
  From Theorem \ref{itoformulaforforms} we obtain
  \[
    \int_{\phi_t(\sigma_p)} \theta  = \int_{\sigma_p} \theta +  \int_{\sigma_p}\int_{0}^{t}\phi_{s}^*\frac{\partial \theta}{\partial t}ds + \int_{\sigma_p} \sum_{k=0}^{m} \int_{0}^{t} \phi_{s}^{*}L_{X^k_s}\theta \circ\! d B^k_s.
  \]
  Applying Proposition \ref{fubinimanifold} in the right side we see that
  \begin{equation}\label{theoremofintegralequationeq1}
    \int_{\sigma_p} \sum_{k=0}^{m} \int_{0}^{t} \phi_{s}^{*}L_{X^k_s}\theta \circ\! d B^k_s
    = \sum_{k=0}^{m}\int_{0}^{t} \left(\int_{\sigma_p} \phi_{s}^{*}L_{X^k_s}\theta \right)\circ\! d B^k_s.
  \end{equation}
  Analogously, we can see that
  \begin{eqnarray}\label{theoremofintegralequationeq2}
    \int_{\sigma_p}\int_{0}^{t}\phi_{s}^*\frac{\partial \theta}{\partial t}ds = \int_{0}^{t}\int_{\phi_{s}(\sigma_p)}\frac{\partial \theta}{\partial t}ds.
  \end{eqnarray}
  Therefore the formula (\ref{theitointegraleq2}) follows from (\ref{theoremofintegralequationeq1}) and (\ref{theoremofintegralequationeq2}). \\
  {\bf 2.}
  We first apply the Stratonovich-It\^o conversion formula in stochastic integrals of the formula (\ref{theitointegraleq2}). It follows that
  \[
    \int_{0}^{t}\!\!\! \left(\int_{\phi_{s}(\sigma_p)}\!\!\!\!\!\!\! L_{X^k_s}\theta \right) \circ\! d B^k_s\! =\!\! \int_{0}^{t}\!\!\! \left(\int_{\phi_{s}(\sigma_p)}\!\!\!\!\!\! L_{X^k_s}\theta \right) dB^k_s + \dfrac{1}{2}[\left(\int_{\phi_{s}(\sigma_p)}\!\!\!\!\!\! L_{X^k_s}\theta\right), B^k_s].
  \]
  Using the formula (\ref{theitointegraleq2}) for $\int_{\phi_{s}(\sigma_p)} L_{X^k_s}\theta $, $k=1, \ldots,n$, we obtain
  \begin{eqnarray*}
    \dfrac{1}{2}[\left(\int_{\phi_{s}(\sigma_p)}L_{X^k_s}\theta\right), B^k_s]
    & = & \dfrac{1}{2} [\sum_{l=0}^{m} \int_{0}^{t} \left(\int_{\phi_{s}(\sigma_p)} L_{X^l_s}L_{X^k_s}\theta \right) \circ\! d B^l_s, B^k_s]\\
    & = & \dfrac{1}{2} \int_{0}^{t} \left(\int_{\phi_{s}(\sigma_p)} L_{X^k_s}L_{X^k_s}\theta \right) ds.
  \end{eqnarray*}
  Therefore we conclude that
  \[
    \int_{0}^{t} \left(\int_{\phi_{s}(\sigma_p)}\!\!\!\!\!\! L_{X^k_s}\theta \right) \circ\! d B^k_s
    = \int_{0}^{t} \left(\int_{\phi_{s}(\sigma_p)} \!\!\!\! L_{X^k_s}\theta \right) dB^k_s
    + \dfrac{1}{2} \int_{0}^{t} \left(\int_{\phi_{s}(\sigma_p)}\!\!\!\!\! L^2_{X^k_s}\theta \right) ds.
  \]
  Thus substituting the equality above in the formula (\ref{theitointegraleq2}) yields the formula (\ref{theitointegraleq1}).\qed
\end{proof}

This Corollary is reformulation of Theorem 3.7 in \cite[ch.IV]{bismut} in terms of time-dependent forms.

A direct consequence of the formula (\ref{theitointegraleq1}) is that if a time-dependent p-form $\theta$ with compact support satisfies the differential equation
\[
  \left\{
  \begin{array}{ccl}
    \dfrac{\partial\theta} {\partial t} & = & - \left(\dfrac{1}{2} \sum_{k=1}^{m}L^2_{X^k_s}+ L_{X^0_s}\right)\theta,\\
    \theta(0,x) & = & \theta_0(x)
  \end{array}
  \right.
\]
then
\[
  \int_{\phi_t(\sigma_p)} \theta = \int_{\sigma_p} \theta_0 +  \sum_{k=1}^{m}\int_{0}^{t} \left(\int_{\phi_{s}(\sigma_p)} \!\!\!\! L_{X^k_s}\theta \right) dB^k_s
\]
is a real martingale. Thus taking expectation we conclude that
\[
  \int_{\sigma_p} \theta_0 = \mathbb{E}\left(\int_{\phi_t(\sigma_p)} \theta\right).
\]

\section{Transport Theorem and Continuity Equations}

In this section we develop some results in stochastic fluid mechanics. We follow close the presentation of Abraham, Ratiu and Marsden \cite{marsden}. Our first result is a stochastic generalization of transport theorem.
\begin{theorem}\label{transporttheorem}
  Let $M$ be a smooth manifold, $\mu$ a volume form and $\phi_{t}$ the flow generated by SDE (\ref{sde}). For a smooth function $f: [0,\infty)\times M \rightarrow \mathbb{R}$, we have
  \[
    \int_{\phi_{t}(\sigma_n)}\!\!\!\! f_t\mu = \int_{\sigma_n}\!\!\! f_0 \mu + \int_{0}^{t}\!\!\!\left(\int_{\phi_{s}(\sigma_n)}\frac{\partial f_s}{\partial t} \mu \right)ds + \sum_{k=0}^{m} \int_{0}^{t} \left(\int_{\phi_{s}(\sigma_n)}  div_{\mu}(f_s X^k_s)\mu \right) \circ\! d B^k_s
   \]
  for any $n$-simplex $\sigma_n$.
\end{theorem}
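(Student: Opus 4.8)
The plan is to apply Corollary~\ref{theoremofintegralequation} directly to the specific $n$-form $\theta_t = f_t \mu$, since the theorem we want is nothing but the specialization of formula~(\ref{theitointegraleq2}) to a top-degree form built from the volume form $\mu$. First I would compute the time derivative $\frac{\partial}{\partial t}(f_t \mu)$; because $\mu$ is a fixed (time-independent) volume form, this is simply $\frac{\partial f_t}{\partial t}\,\mu$, which already matches the $ds$-term appearing in the claimed identity.

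The key computational step is to identify the Lie derivative term $L_{X^k_s}(f_s \mu)$ with $\mathrm{div}_\mu(f_s X^k_s)\,\mu$. Here I would invoke the defining property of the divergence relative to a volume form, namely that for any vector field $Y$ one has $L_Y \mu = \mathrm{div}_\mu(Y)\,\mu$, and then use the Leibniz rule for the Lie derivative of a product together with the fact that $\mu$ is a top-degree form. Concretely, using Cartan's formula $L_Y = d\,\iota_Y + \iota_Y\,d$ and noting $d\mu = 0$, one gets $L_{X^k_s}(f_s\mu) = d(\iota_{X^k_s}(f_s \mu)) = d(f_s\,\iota_{X^k_s}\mu)$; expanding and re-collecting this yields exactly $\mathrm{div}_\mu(f_s X^k_s)\,\mu$, the divergence of the rescaled field $f_s X^k_s$. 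This is the only place where the special structure of a volume form (as opposed to a general $p$-form) enters, so it is the conceptual heart of the argument.

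With these two identifications in hand, I would substitute $\theta = f\mu$ into~(\ref{theitointegraleq2}): the base term $\int_{\sigma_n}\theta$ becomes $\int_{\sigma_n} f_0\mu$, the $\frac{\partial\theta}{\partial t}$ integrand becomes $\frac{\partial f_s}{\partial t}\mu$, and each Stratonovich integrand $L_{X^k_s}\theta$ becomes $\mathrm{div}_\mu(f_s X^k_s)\mu$, reproducing the stated formula term by term. I expect the main obstacle to be purely a matter of careful bookkeeping in the divergence identity rather than any genuine difficulty: one must verify that $f_s$ is evaluated correctly inside the divergence (i.e.\ that $\mathrm{div}_\mu(f_s X^k_s)$, not $f_s\,\mathrm{div}_\mu(X^k_s)$, is what emerges) and that the compact-support and smoothness hypotheses of Corollary~\ref{theoremofintegralequation} are met so that the integration and the stochastic integrals are well defined. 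Since $f\mu$ is a smooth time-dependent $n$-form and $\mu$ is a volume form on a fixed simplex, these regularity conditions are immediate, so the proof reduces to the algebraic identity above followed by direct substitution.
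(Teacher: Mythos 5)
Your proposal is correct and follows essentially the same route as the paper: apply formula~(\ref{theitointegraleq2}) of Corollary~\ref{theoremofintegralequation} to the time-dependent $n$-form $\theta = f\mu$ and use the identity $L_{X^k_s}(f_s\mu) = div_{\mu}(f_s X^k_s)\,\mu$. The only difference is that you spell out the justification of this identity via Cartan's formula, which the paper states without proof.
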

\begin{remark}
  Being $\mu$ a volume form on $M$, we can generated a measure $m_\mu$ associated to it. Therefore the functions $f$ adopted in Theorem above are smooth functions in $L^1(M,\mu_M)$, as required in \cite{marsden}.
\end{remark}
\begin{proof}
  We first apply the Corollary \ref{theoremofintegralequation} to obtain
  \[
    \int_{\phi_t(\sigma_n)}\!\!\!\!\!\!\! f_t \mu\! = \! \int_{\sigma_n}\!\!\! f_0 \mu + \int_{0}^{t}\!\!\!\int_{\phi_{s}(\sigma_n)}\frac{\partial}{\partial t}(f_s \mu)ds + \sum_{k=0}^{m} \int_{0}^{t}\!\! \left(\int_{\phi_{s}(\sigma_n)}\!\!\!\!\!\!\!\!\!  L_{X^k_s}(f_s \mu) \right) \circ\! d B^k_s.
  \]
  As  $L_{X^k_t}(f_t \mu) = div_{\mu}(f_t X^k_t)\mu$ we have
  \[
    \int_{\phi_t(\sigma_n)}\!\!\!\!\!\!\! f_t \mu = \int_{\sigma_n}\!\!\! f_0 \mu + \int_{0}^{t}\!\!\!\int_{\phi_{s}(\sigma_n)}\frac{\partial f_t}{\partial t} \mu ds + \sum_{k=0}^{m} \int_{0}^{t} \left(\int_{\phi_{s}(\sigma_n)}  div_{\mu}(f_s X^k_s)\mu \right) \circ\! d B^k_s,
  \]
  which proves the theorem.\qed
\end{proof}

\begin{theorem}\label{transporttheorem2}
  Let $M$ be a smooth manifold, $\mu$ a volume form and $\phi_{t}$ the flow generated by SDE (\ref{sde}). For a smooth function $f: [0,\infty)\times M \rightarrow \mathbb{R}$, we have
  \begin{eqnarray*}
    d \mathbb{E}\left(\int_{\phi_t(\sigma_n)} f_t \mu\right)\!\!\!
    & = & \mathbb{E}\left(\int_{\phi_{s}(\sigma_n)}\left(\frac{\partial f_s }{\partial t} + f_s(div_\mu(X^0_s) +\dfrac{1}{2} \sum_{k=1}^{m} div_\mu(X^k_s)^2)\right)\mu\right)\\
    & + & \dfrac{1}{2} \sum_{k=1}^{m}\mathbb{E}\left(\int_{\phi_{s}(\sigma_n)}\left(2X^k_s(f_s)div_\mu(X^k_s) + L_{X^k_s}(f_s div_\mu(X_s^k))\right)\mu\right)\\
    & + & \mathbb{E}\left(\int_{\phi_{s}(\sigma_n)}\mathcal{L}_s(f_s)\mu\right).
  \end{eqnarray*}
  for any $n$-simplex $\sigma_n$, where $\mathcal{L}_s = X_s^0 + \dfrac{1}{2} \sum_{k=1}^{m}X^k_s$ is the infinitesimal generator of sde (\ref{sde}).
\end{theorem}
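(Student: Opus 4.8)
The plan is to argue not from the Stratonovich identity of Theorem~\ref{transporttheorem} but from its It\^o form, namely formula~(\ref{theitointegraleq1}) of Corollary~\ref{theoremofintegralequation}, specialized to the top-degree form $\theta=f_t\,\mu$. Because $\mu$ does not depend on time, $\frac{\partial}{\partial t}(f_t\mu)=\frac{\partial f_t}{\partial t}\,\mu$, and because $\mu$ is a volume form, $L_{X^k_s}(f_s\mu)=div_\mu(f_s X^k_s)\,\mu$ and $L^2_{X^k_s}(f_s\mu)=div_\mu\!\big(div_\mu(f_s X^k_s)\,X^k_s\big)\mu$. Substituting $\theta=f_t\mu$ into~(\ref{theitointegraleq1}) gives an expression in which the stochastic integrals are genuine It\^o integrals against $dB^k_s$, $k=1,\dots,m$, rather than Stratonovich ones; this is the whole point of the choice, since It\^o integrals are (local) martingales of zero mean.

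Next I would take expectations on both sides. Under the integrability hypotheses on $f$ recorded in the Remark following Theorem~\ref{transporttheorem} (which guarantee that each $\int_0^t\big(\int_{\phi_s(\sigma_n)}L_{X^k_s}(f_s\mu)\big)\,dB^k_s$ is a true $L^1$-martingale, not merely a local one), the It\^o terms vanish in expectation. A stochastic Fubini argument, of the same type as Proposition~\ref{fubinimanifold}, lets me interchange $\mathbb{E}$ with the $ds$-integral, so that
\[
  \mathbb{E}\!\left(\int_{\phi_t(\sigma_n)} f_t\mu\right)
  = \int_{\sigma_n} f_0\mu
  + \int_0^t \mathbb{E}\!\left(\int_{\phi_s(\sigma_n)}\Big(\tfrac{\partial f_s}{\partial t}\mu + L_{X^0_s}(f_s\mu) + \tfrac12\textstyle\sum_{k=1}^m L^2_{X^k_s}(f_s\mu)\Big)\right)ds .
\]
Differentiating this identity in $t$ produces the $ds$-density, which is already the asserted formula before the Lie derivatives are written out.

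It then remains to expand $L_{X^0_s}(f_s\mu)$ and $L^2_{X^k_s}(f_s\mu)$ into the three groups appearing in the statement. Using the product rule $div_\mu(gX)=X(g)+g\,div_\mu(X)$ twice, I would write $L_{X^0_s}(f_s\mu)=\big(X^0_s(f_s)+f_s\,div_\mu(X^0_s)\big)\mu$ and
\[
  L^2_{X^k_s}(f_s\mu)=\Big((X^k_s)^2 f_s + 2X^k_s(f_s)\,div_\mu(X^k_s) + f_s\,X^k_s\!\big(div_\mu(X^k_s)\big) + f_s\,div_\mu(X^k_s)^2\Big)\mu .
\]
Collecting the pure second-order terms $X^0_s(f_s)+\tfrac12\sum_k (X^k_s)^2 f_s$ into $\mathcal{L}_s(f_s)$, the terms carrying a bare factor $f_s$ into $f_s\big(div_\mu(X^0_s)+\tfrac12\sum_k div_\mu(X^k_s)^2\big)$, and rewriting the remaining mixed terms with the identity $L_{X^k_s}\big(f_s\,div_\mu(X^k_s)\big)=X^k_s(f_s)\,div_\mu(X^k_s)+f_s\,X^k_s\!\big(div_\mu(X^k_s)\big)$, yields the three lines of the theorem.

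I expect the main obstacle to be the justification that the It\^o integrals are true martingales (so that their expectations really are zero) together with the attendant interchange of $\mathbb{E}$ with the time integral and with $d/dt$; this is where the compact-support / $L^1(M,\mu)$ hypothesis on $f$ and control of $\phi_s^*$ on the fixed simplex $\sigma_n$ enter. The algebraic regrouping in the last step is routine but demands careful bookkeeping of the iterated divergence $div_\mu\!\big(div_\mu(f_s X^k_s)\,X^k_s\big)$.
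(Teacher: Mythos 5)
Your proposal follows essentially the same route as the paper: apply the It\^o form (\ref{theitointegraleq1}) of Corollary \ref{theoremofintegralequation} to $\theta=f_t\mu$, use $L_{X^k_s}(f_s\mu)=div_\mu(f_sX^k_s)\mu$, kill the $dB^k_s$ terms by taking expectations, differentiate in $t$, and then expand the iterated divergence. One remark on the final bookkeeping, which is the only place the two diverge: your (correct) expansion $div_\mu\big(div_\mu(f X)X\big)=X^2(f)+2X(f)\,div_\mu(X)+f\,X(div_\mu(X))+f\,div_\mu(X)^2$ regroups as $X^2(f)+X(f)\,div_\mu(X)+L_X(f\,div_\mu(X))+f\,div_\mu(X)^2$, so to land on the displayed statement one must read $\mathcal{L}_s$ as the genuine second-order generator $X^0_s+\tfrac12\sum_k (X^k_s)^2$ and take the coefficient of $X^k_s(f_s)\,div_\mu(X^k_s)$ in the second line to be $1$ rather than $2$; your algebra is the one to trust here, and the discrepancy lies in the statement as printed, not in your method.
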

\begin{proof}
  We first apply the Corollary \ref{theoremofintegralequation} to obtain
  \begin{eqnarray*}
    \int_{\phi_t(\sigma_n)} f_t \mu
    &= & \int_{\sigma_n} f_0 \mu +  \int_{0}^{t} \left(\int_{\phi_{s}(\sigma_n)}(\frac{\partial }{\partial t} + \dfrac{1}{2} \sum_{k=1}^{m}L^2_{X^k_s}+ L_{X^0_s})(f_s \mu) \right) ds  \\
    & + & \sum_{k=1}^{m}\int_{0}^{t} \left(\int_{\phi_{s}(\sigma_n)} \!\!\!\! L_{X^k_s}(f_s \mu) \right) dB^k_s.
  \end{eqnarray*}
  As  $L_{X^k_t}(f_t \mu) = div_{\mu}(f_t X^k_t)\mu$ we have
  \begin{eqnarray*}
    \int_{\phi_t(\sigma_n)} f_t \mu
    &= & \int_{\sigma_n} f_0 \mu +  \int_{0}^{t} \left(\int_{\phi_{s}(\sigma_n)}(\frac{\partial f_s}{\partial t} + div_{\mu}(f_s X_s^0) + \dfrac{1}{2} \sum_{k=1}^{m}div_\mu(div_\mu(f_sX_s^k)X_s^k))\mu\right) ds  \\
    & + & \sum_{k=1}^{m}\int_{0}^{t} \left(\int_{\phi_{s}(\sigma_n)} \!\!\!\! div_{\mu}(f_t X^k_t)\mu \right) dB^k_s.
  \end{eqnarray*}
  Taking the expectation we have
  \begin{eqnarray*}
    \mathbb{E}\left(\int_{\phi_t(\sigma_n)} f_t \mu\right)\!\!\!
    & = & \int_{\sigma_n} f_0 \mu \\
    & + & \!\!\!\!\!\!\! \int_{0}^{t} \mathbb{E}\left(\int_{\phi_{s}(\sigma_n)}\!\!\!\left(\frac{\partial f_s }{\partial t} + div_{\mu}(f_s X_s^0) + \dfrac{1}{2} \sum_{k=1}^{m}div_\mu(div_\mu(f_sX_s^k)X_s^k)\right)\mu\right) ds.
  \end{eqnarray*}
  Differentiating we obtain
  \begin{eqnarray*}
    d \mathbb{E}\left(\int_{\phi_t(\sigma_n)} f_t \mu\right)\!\!\!
    & = &\!\!\!\!\! \mathbb{E}\left(\int_{\phi_{s}(\sigma_n)}\left(\frac{\partial f_s }{\partial t} + div_{\mu}(f_s X_s^0) + \dfrac{1}{2} \sum_{k=1}^{m}div_\mu(div_\mu(f_sX_s^k)X_s^k)\right)\mu\right).
  \end{eqnarray*}
  Now applying the properties of $div_\mu$ we can deduce that
  \begin{eqnarray*}
    d \mathbb{E}\left(\int_{\phi_t(\sigma_n)} f_t \mu\right)\!\!\!
    & = & \mathbb{E}\left(\int_{\phi_{s}(\sigma_n)}\left(\frac{\partial f_s }{\partial t} + f_s(div_\mu(X^0_s) +\dfrac{1}{2} \sum_{k=1}^{m} div_\mu(X^k_s)^2)\right)\mu\right)\\
    & + & \dfrac{1}{2} \sum_{k=1}^{m}\mathbb{E}\left(\int_{\phi_{s}(\sigma_n)}\left(2X^k_s(f_s)div_\mu(X^k_s) + L_{X^k_s}(f_s div_\mu(X_s^k))\right)\mu\right)\\
    & + & \mathbb{E}\left(\int_{\phi_{s}(\sigma_n)}(X_s^0(f_s) + \dfrac{1}{2} \sum_{k=1}^{m}X^k_s(f_s)) \mu\right).
  \end{eqnarray*}
  \qed
\end{proof}
\begin{corollary}
  Under hypothesis of Theorem \ref{transporttheorem2}, furthermore, if the vector fields $X_0, \ldots, X_m$ in sde (\ref{sde}) are divergence free, then
  \begin{eqnarray*}
    d \mathbb{E}\left(\int_{\phi_t(\sigma_n)} f_t \mu\right)\!\!\!
    & = & \mathbb{E}\left(\int_{\phi_{s}(\sigma_n)}\left(\frac{\partial f_s }{\partial t} + \mathcal{L}_s(f_s)\right)\mu\right).
  \end{eqnarray*}
  for any $n$-simplex $\sigma_n$, where $\mathcal{L}_s = X_s^0 + \dfrac{1}{2} \sum_{k=1}^{m}X^k_s$ is the infinitesimal generator of sde (\ref{sde}).
\end{corollary}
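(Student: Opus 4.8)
The plan is to specialize the conclusion of Theorem \ref{transporttheorem2} to the divergence-free setting, so that the entire argument reduces to observing which terms vanish. Since the corollary is stated \emph{under the hypotheses of} Theorem \ref{transporttheorem2}, I would take its final displayed identity as the starting point rather than reprove it from Corollary \ref{theoremofintegralequation}. The divergence-free assumption means precisely that $div_\mu(X^k_s) = 0$ as a function on $M$, for every $k = 0, 1, \ldots, m$ and every $s$.

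With this in hand I would eliminate the three families of terms in Theorem \ref{transporttheorem2} that carry a factor of $div_\mu(X^k_s)$. First, $f_s\bigl(div_\mu(X^0_s) + \frac{1}{2}\sum_{k=1}^{m} div_\mu(X^k_s)^2\bigr)$ is identically zero. Second, each summand $2 X^k_s(f_s)\, div_\mu(X^k_s)$ vanishes for the same reason. Third, and this is the only step deserving a small remark, the term $L_{X^k_s}\bigl(f_s\, div_\mu(X^k_s)\bigr)$ vanishes because its argument $f_s\, div_\mu(X^k_s)$ is the zero function, and the Lie derivative of the zero function along any vector field is again zero; in particular no computation of $X^k_s$ acting on $div_\mu(X^k_s)$ is required.

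After discarding these terms, the only surviving contributions are $\partial f_s/\partial t$ from the first line and $\mathcal{L}_s(f_s)$ from the last line of Theorem \ref{transporttheorem2}, which together yield the claimed identity. The proof is therefore a direct specialization, and I anticipate no genuine obstacle beyond the bookkeeping just described. The single point worth stating explicitly is that the vanishing of the Lie-derivative term rests on its argument being identically zero, not on any pointwise identity for $L_{X^k_s}$, so the conclusion holds verbatim on any volume manifold $(M,\mu)$ once all the fields $X^0, \ldots, X^m$ are $\mu$-divergence free.
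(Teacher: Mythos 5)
Your proposal is correct and coincides with the paper's (implicit) argument: the paper states this corollary without proof, treating it as an immediate specialization of Theorem \ref{transporttheorem2}, which is exactly what you carry out by setting $div_\mu(X^k_s)=0$ for all $k$ and observing that every term carrying a divergence factor, including $L_{X^k_s}(f_s\, div_\mu(X^k_s))=L_{X^k_s}(0)=0$, drops out. Nothing further is needed.
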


We use Theorem \ref{transporttheorem} to give a stochastic version of continuity equation. Let $M$ be a compact oriented Riemannian manifold and $\mu$ the Riemannian volume form. We observe that the mechanical interpretation of the right side of SDE (\ref{sde})
\[
  dx  =  X_0(t,x) dt + X^k(t,x)\circ\! d B^{k}_t
\]
is the velocity field of the fluid.

For each time $t$, we shall assume that the fluid has a well-defined mass density $\rho_t(x) = \rho(t,x)$. Taking any open set $U$ in $M$ we assume that the mass of fluid in $U$ at time $t$ is given by
\[
  m(t,U) = \int_{U} \rho_{t} \mu.
\]

Assuming that {\it mass is neither created nor destroyed}. The meaning of this assumption to the open set $U$ is
\[
  \int_{\phi_t(U)} \rho_{t} \mu = \int_{U} \rho_{0} \mu.
\]
Applying Theorem \ref{transporttheorem} we obtain
\[
  \int_{0}^{t}\!\!\!\left(\int_{\phi_{s}(U)}\frac{\partial \rho_s}{\partial t} \mu \right)ds + \sum_{k=0}^{m} \int_{0}^{t} \left(\int_{\phi_{s}(U)}  div_{\mu}(\rho_s X^k_s)\mu \right) \circ\! d B^k_s = 0
\]
We now apply the quadratic variation with respect to $B_t^l, l=1, \ldots, n,$ to get
\[
  \left[ \int_{0}^{t}\!\!\!\left(\int_{\phi_{s}(U)}\frac{\partial \rho_s}{\partial t} \mu \right)ds + \sum_{k=0}^{m} \int_{0}^{t} \left(\int_{\phi_{s}(U)}  div_{\mu}(\rho_s X^k_s)\mu \right) \circ\! d B^k_s, B^l_t\right]  =  0.
\]
Thus we conclude, for k=1,\ldots m, that
\[
  \int_{\phi_{s}(U)}  div_{\mu}(\rho_s X^k_s)\mu = 0.
\]
Being $U$ an arbitrary open, the continuity equations are given by
\begin{eqnarray}\label{continuityequation2}
  \frac{\partial \rho_t}{\partial t} + div_{\mu}(\rho_t X^0_t) & = & 0 \nonumber\\
  div_{\mu}(\rho_t X^k_t)& = & 0.
\end{eqnarray}


To end this work we apply the continuity equations on torus. The stochastic flow on torus has been study in many works, we cite for example \cite{cruzeiro1} and \cite{cruzeiro2}. In our case  we use a stochastic flow with divergence free vector fields.

Let $k=(k_1,k_2) \in \mathbb{Z}^2$ and $\theta = (\theta_1, \theta_2) \in \mathbb{T}$. Define the following vector fields on torus $\mathbb{T}$
\begin{eqnarray}\label{vectorAandB}
  A_k & = & k_2 \cos (k \cdot \theta)\partial_1 - k_1\cos(k \cdot \theta) \partial_2 \nonumber\\
  B_k & = & k_2 \sin (k \cdot \theta)\partial_1 - k_1\sin(k \cdot \theta) \partial_2,
\end{eqnarray}
where $k \cdot \theta = k_1 \theta_1 + k_2 \theta_2$ and $\partial_1, \partial_2$ are the coordinate vector field on $\mathbb{T}$. In \cite{cruzeiro1} is showed that $A_k$ and $B_k$ are divergence free.

Let $(B^1_t,B^2_t)$ be a $\mathbb{R}^2$-valued standard Brownian motion in $\mathbb{R}^2$. Now, consider a divergence free vector field $u(t) \in \mathbb{T}$ and the following Stratonovich stochastic differential equation
\[
  dg^k(t) = u_tdt + A_k dB^1_t + B_k dB^2_t.
\]
We want to study the continuity equations (\ref{continuityequation2}) for the flow above and a volume form $\mu$ on torus. In fact, the continuity equations are given by
\begin{eqnarray*}
  \frac{\partial \rho^k_t}{\partial t} + div_{\mu}(\rho^k_t u_t) & = & 0 \\
  div_{\mu}(\rho^k_t A_k)& = & 0\\
  div_{\mu}(\rho^k_t B_k)& = & 0.
\end{eqnarray*}
From the divergence free property we obtain
\begin{equation}\label{flowtorustime}
  \rho^k_t= \rho^k_0 + \int_{0}^{t} u_s(\rho^k_s) ds
\end{equation}
and
\[
  <\nabla\rho^k_t,A_k> =  0 \ \ \textrm{and} \ \ <\nabla\rho^k_t,B_k> =  0.
\]
Applying (\ref{vectorAandB}) in the equalities above we obtain
\begin{eqnarray*}
  \cos (k \cdot \theta)<(k_2\partial_1,-k_1\partial_2),\nabla\rho^k_t> & = & 0\\
  \sin (k \cdot \theta)<(k_2\partial_1,-k_1\partial_2),\nabla\rho^k_t> & = & 0
\end{eqnarray*}
and, consequently, $<(k_2\partial_1,-k_1\partial_2),\nabla\rho^k_t>  = 0$.

Being $\theta$ arbitrary, we conclude that $\nabla\rho^k_t= (0,0)$. It implies that $\partial_1\rho^k_{t} = 0$ and $\partial_2\rho^k_{t} = 0$. Since torus is a connected manifold, $\rho^k_{t}(\theta_1,\theta_2) = \rho^k_{t}(0,0)$. Therefore applying $u_t$ in $\rho^k_{t}$ give us that $u_t(\rho^k_{t}) = 0$. We conclude from the equation (\ref{flowtorustime}) that
  \[
    \rho^k_{t}(\theta_1,\theta_2) = \rho^k_{0}(0,0).
  \]


\end{document}